\documentclass[pdflatex,sn-mathphys-num]{sn-jnl}

\usepackage{amsmath,amssymb,amsthm,mathtools}
\usepackage{graphicx}%
\usepackage{multirow}%
\usepackage{amsmath,amssymb,amsfonts}%
\usepackage{amsthm}%
\usepackage{mathrsfs}%
\usepackage[title]{appendix}%
\usepackage{xcolor}%
\usepackage{textcomp}%
\usepackage{manyfoot}%
\usepackage{booktabs}%
\usepackage{algorithm}%
\usepackage{algorithmicx}%
\usepackage{algpseudocode}%
\usepackage{listings}%
\usepackage{xcolor}
\usepackage{graphicx}
\usepackage{booktabs}
\usepackage{hyperref}
\hypersetup{
  colorlinks=true,
  linkcolor=blue!55!black,
  citecolor=blue!55!black,
  urlcolor=blue!55!black
}

\theoremstyle{plain}
\newtheorem{theorem}{Theorem}
\newtheorem{proposition}[theorem]{Proposition}

\newtheorem{corollary}[theorem]{Corollary}
\theoremstyle{definition}

\newtheorem{assumption}[theorem]{Assumption}
\newtheorem{remark}[theorem]{Remark}
\newtheorem{example}[theorem]{Example}

\newcommand{\R}{\mathbb{R}}
\newcommand{\E}{\mathbb{E}}
\newcommand{\PR}{\mathbb{P}}
\newcommand{\Filt}{\mathbb{F}}
\newcommand{\Fx}{\mathcal{F}}
\newcommand{\Lgen}{\mathscr{L}}
\newcommand{\ind}{\mathbf{1}}
\newcommand{\dd}{\,\mathrm{d}}
\newcommand{\snr}{\vartheta}
\newcommand{\post}{\Pi}
\newcommand{\odds}{\Phi}
\newcommand{\bbar}{\bar{B}}

\begin{document}

\title{Delay-Penalty Comparison for Sequential Testing and Quickest Detection in State-Dependent Diffusion Models}

\author*{\fnm{Ye} \sur{Liang}}\email{ye-liang@uiowa.edu}

\affil{\orgdiv{College of Engineering},
  \orgname{The University of Iowa},
  \orgaddress{\city{Iowa City}, \postcode{52242}, \state{IA},\country{USA}}}
  
\abstract{
We study sequential testing and Bayesian quickest detection for diffusion
observations whose drift changes between two alternatives while the
signal-to-noise ratio may depend on the current observation. In this setting the posterior probability is generally not a closed one-dimensional Markov statistic: the natural sufficient state is the augmented process consisting of the posterior (or likelihood ratio) and the observed diffusion. We formulate both testing and quickest detection within this common filtering framework and identify the corresponding degenerate free-boundary problems. The main contribution is a delay-penalty comparison principle. For a common terminal false-alarm or terminal decision cost, a pointwise larger running delay penalty increases the value of continuation, shrinks the continuation region, and yields earlier stopping. When the stopping set has a one-sided posterior representation, this gives an order relation for the optimal alarm boundaries. The result applies to linear delay costs and to nonlinear marginal delay penalties after the appropriate Markovian augmentation, and is illustrated by a constant signal-to-noise Shiryaev example
in which the alarm threshold is computed numerically and shown to be monotone in the delay cost. The framework clarifies how state-dependent information and nonlinear delay costs jointly affect the geometry of sequential testing and quickest-detection rules.}

\keywords{sequential testing; quickest detection; disorder problem; diffusion process; delay penalty; comparison principle; optimal stopping; free-boundary problem; variational inequality.}

\pacs[MSC Classification]{62L10; 62L15; 60G40; 60J60; 93E11}

\maketitle

\section{Introduction}\label{sec:intro}

Sequential methods determine endogenously when accumulated data justify a terminal action, balancing the cost of further observation against the quality of the eventual decision \citep{wald2004sequential,wald1948optimum,rincon2025sequential,wang2025analysis,chow1971great,griffith2021statistics,silva2020optimal,silva2015continuous,wang2025analysis1,fischer2026improving}.
Two problems organize much of the field. In \emph{sequential testing} an observer
watches a process whose law is governed by one of two simple hypotheses and must
choose, as a function of the data, both a stopping time and a terminal decision so
as to trade sampling cost against the probabilities of a wrong decision \cite{shiryaev2025optimal,pabbaraju2026simple,liu2025bidirectional}. In
\emph{quickest detection}, or the \emph{disorder problem}, the law of the
observation changes at an unobservable time, and the observer raises an alarm to
trade the frequency of false alarms against the delay incurred in detecting a true
change \citep{shiryaev1963optimum,naha2023quickest,snow2024quickest,sha2025quickest,liang2025global}. Minimax counterparts of the detection
problem replace the prior on the change time by a worst-case criterion and lead to
the CUSUM and Shiryaev--Roberts procedures
\citep{page1954continuous,banerjee2024minimax,xie2022minimax,wang2026algebraic,fromont2023minimax,yang2024sequential,huselitz2026online,lorden1971procedures,tosun2023robust,yu2024network,moustakides1986optimal,pollak1985optimal,wang2026damage,polunchenko2018comparative,pollak2009optimality}. The
present paper is Bayesian and optimal-stopping based.

Both problems arise across applied domains. In quantitative finance a regime
shift in a return or volatility process must be detected promptly while
controlling false alarms; in statistical process control a manufacturing stream
must be monitored for a shift in mean; in structural health monitoring,
surveillance, and intrusion detection a sensor stream must be screened for the
onset of an anomaly; and in epidemiological monitoring an incidence series must be
watched for the start of an outbreak. In each case the natural observation model
is a continuously sampled diffusion, the cost of a missed or delayed detection is
problem specific and frequently nonlinear, and the practitioner needs to
understand how the rule---and in particular the alarm threshold---moves when the
penalty structure changes. That comparative question, rather than the explicit
solution of any single model, is the focus of this paper.

By filtering, both problems reduce to fully observed optimal stopping for a
Markovian sufficient statistic, after which the value function solves a
variational inequality and the optimal rule is a first-exit time from a
continuation region \citep{liptser1977statistics,peskir2006optimal,yu2026pattern,peskir2000sequential,epstein2022optimal,ekstrom2022multi,ankirchner2020bayesian,wang2026breakdown,shiryaev2025optimal}.
For Brownian observations with constant drift alternatives the sufficient
statistic is a one-dimensional posterior diffusion and the rules are explicit
thresholds. The situation changes when the diffusion coefficients are state
dependent: the \emph{signal-to-noise ratio} then depends on the current
observation, the posterior equation no longer closes in the posterior coordinate
alone, and the sufficient statistic becomes multidimensional. This is the regime
we study, and it is the regime in which the comparative-statics question is least
understood.

\paragraph{Contribution.}
This paper makes two related contributions. First, we give a unified formulation
of sequential testing and Bayesian quickest detection for state-dependent
diffusion observations. The formulation makes explicit that, unless the
signal-to-noise ratio is constant or otherwise reducible, the posterior
probability is not a closed Markov state; the closed sufficient statistic is the
augmented process $(\post_t,X_t)$, or equivalently $(\odds_t,X_t)$, with an
explicit degenerate generator. Second, and more importantly, we prove a
delay-penalty comparison theorem for the resulting optimal stopping problems.
For a fixed terminal cost, increasing the running delay penalty increases the
value function, shrinks the continuation region, and induces earlier stopping.
Whenever the stopping region is known to be one-sided in the posterior
coordinate, this gives a monotone ordering of the alarm boundaries. This
comparative-static result applies directly to linear delay costs and extends to
nonlinear marginal delay penalties after adding the appropriate penalty-memory
state. A worked Shiryaev example computes the alarm threshold numerically by a
finite-difference solution of the variational inequality and exhibits the
predicted monotonicity.

\paragraph{Organization.}
Section~\ref{sec:filter} develops the filtering reductions for both problems and
proves the closed Markov-state result. Section~\ref{sec:os} states the generic
optimal stopping problem and the variational inequality in complementarity form.
Section~\ref{sec:comparison} proves the delay-penalty comparison theorem and its
linear and nonlinear specializations and gives the sampling-cost analogue for
testing. Section~\ref{sec:fb} gives the free-boundary interpretation and the
martingale verification. Section~\ref{sec:example} is the worked Shiryaev example
with the numerical method described in full. Section~\ref{sec:cusum} relates the
framework to CUSUM and Shiryaev--Roberts procedures, and Section~\ref{sec:concl}
concludes.

\section{Literature Review}

\paragraph{Relation to existing literature.}

The literature relevant to the present work lies at the intersection of
sequential analysis, quickest detection, filtering, and optimal stopping.

On the sequential-analysis side, the foundations were laid by Wald and
coauthors through the theory of sequential testing and optimal stopping of
experiments
\citep{wald2004sequential,wald1948optimum,yu2026rigorous,chow1971great,griffith2021statistics}.
In parallel, Shiryaev initiated the Bayesian disorder problem, in which an
unobservable change point must be detected as rapidly as possible while
controlling false alarms
\citep{shiryaev1963optimum}. 
These developments evolved into the modern theory
of quickest detection, encompassing both Bayesian and minimax
formulations, with comprehensive treatments given by
Shiryaev's monographs and retrospective survey and by the
unified treatment of Poor and Hadjiliadis and others \citep{shiryaev2025optimal,poor2009quickest,shiryaev2010quickest,yu2026beyond,shiryaev2009stochastic,pollak1985diffusion,pollak1985optimal,moustakides1986optimal,shiryaev2019stochastic,cai2026optimal}. The Bayesian formulation is naturally
expressed as a Markov optimal stopping problem after filtering, while minimax
formulations lead to procedures such as CUSUM and Shiryaev--Roberts
\citep{page1954continuous,wang2025multi,lorden1971procedures,moustakides1986optimal,
pollak1985optimal,pollak2009optimality,polunchenko2018comparative,gao2022rolling}. The field
now provides a unified framework for applications ranging from engineering and
finance to surveillance and epidemiology.

A second strand of literature concerns diffusion observations and
state-dependent signal structures. For Brownian models with constant
signal-to-noise ratio, filtering reduces the problem to a one-dimensional
posterior diffusion and optimal rules are characterized by scalar thresholds.
The situation becomes substantially more difficult when the signal-to-noise
ratio depends on the current state of the observed diffusion. In this setting,
the posterior probability alone generally fails to form a closed Markov state.
A line of research initiated by Gapeev and Shiryaev
\citep{gapeev2011sequential,gapeev2013bayesian} developed sequential testing
and quickest detection formulations for diffusion processes with
state-dependent coefficients. Subsequent analyses by Johnson and Peskir
\citep{johnson2017quickest,yu2026from,johnson2018sequential} revealed the rich boundary
structure that may arise even in special cases such as Bessel-process
observations. Most recently, Ernst and Peskir
\citep{ernst2024gapeev} resolved the Gapeev--Shiryaev conjecture by proving
that monotonicity of the signal-to-noise ratio implies monotonicity of the
associated optimal stopping boundaries.

A related body of work studies multidimensional and multi-source detection
problems. When observations are collected from multiple sensors or coupled
systems, the filtering state becomes vector valued and the stopping region is
typically a hypersurface rather than a scalar threshold. Such models arise in
distributed surveillance, sensor networks, and structural monitoring, and
provide natural examples where finite-dimensional sufficient statistics remain
available but the geometry of the stopping rule becomes substantially more
complex \cite{ludkovski2012bayesian,zhang2014quickest,kurt2018multisensor,konev2017quickest,didi2024active,dayanik2016sequential}.

Methodologically, the present paper belongs to the optimal-stopping and
free-boundary literature \cite{peskir2006optimal}. After filtering, both sequential testing and
Bayesian quickest detection reduce to variational inequalities for Markov
sufficient statistics, with optimal rules represented as first-entry times
into stopping regions
\citep{peskir2000sequential,ekstrom2022multi,
epstein2022optimal,ankirchner2020bayesian}. Much of the existing literature
focuses on deriving explicit solutions, characterizing free boundaries, or
establishing structural properties of stopping regions for a fixed penalty
specification. In diffusion-based optimal stopping problem, the principal questions
have traditionally been existence, regularity, smooth fit, and
stochastic control of optimal boundaries \cite{peskir2019continuity,arkin2009variational,oshima2006optimal,oksendal2019applied}.

The present paper addresses a different question. We do not seek a new
explicit solution of a particular diffusion stopping problem. Instead, we
study how the optimal rule changes when the delay-penalty structure changes.
Our object is therefore the comparative-statics map
\[
\text{penalty profile}
\longmapsto
\text{value function}
\longmapsto
\text{continuation region}
\longmapsto
\text{stopping boundary}.
\]
Once a filtering reduction and optimal-stopping formulation are available,
we show that larger marginal delay penalties increase the value function,
shrink continuation regions, induce earlier stopping, and, whenever a
one-sided boundary representation is known, produce a monotone ordering of
alarm boundaries. Thus the contribution of the paper is not a new boundary
formula but a structural comparison principle that applies across a broad
class of diffusion-based sequential testing and quickest-detection models.

\section{Filtering Reductions for Diffusion Sequential Problems}\label{sec:filter}

Throughout, $(\Omega,\Fx,\Filt,\PR)$ carries a standard Brownian motion
$B=(B_t)_{t\ge0}$, the observation is a one-dimensional diffusion
$X=(X_t)_{t\ge0}$ on a state space $I\subseteq\R$, and $\Filt^X=(\Fx^X_t)_{t\ge0}$
is the augmented right-continuous observation filtration. We write
$\PR_i:=\PR(\,\cdot\mid\theta=i)$ for the conditional laws under the two
hypotheses and $\E_i,\E_\pi$ for the corresponding expectations.

\subsection{Sequential testing}\label{sec:test}
Under hypothesis $H_i$, $i\in\{0,1\}$, the observation solves
\begin{equation}\label{eq:Xdyn}
  \dd X_t=\mu_i(X_t)\dd t+\sigma(X_t)\dd B_t,\qquad X_0=x_0,
\end{equation}
with $\mu_0\not\equiv\mu_1$ and $\sigma>0$. The hidden hypothesis
$\theta\in\{0,1\}$ has prior $\pi:=\PR(\theta=1)\in(0,1)$. A testing rule is a
pair $(\tau,d)$ consisting of an $\Filt^X$-stopping time $\tau$ and an
$\Fx^X_\tau$-measurable terminal decision $d\in\{0,1\}$. With unit sampling cost
per unit time and error costs $a,b>0$ for the two types of error, the Bayes risk
is
\begin{equation}\label{eq:risk-test}
  R_\pi(\tau,d)=\E_\pi[\tau]
   +a\,\PR_\pi(d=1,\theta=0)+b\,\PR_\pi(d=0,\theta=1).
\end{equation}
Let $\post_t:=\PR_\pi(\theta=1\mid\Fx^X_t)$ be the posterior probability of $H_1$.
For a fixed stopping time, the terminal-error part of \eqref{eq:risk-test} is
minimized by deciding $H_1$ when its posterior cost is smaller, that is,
$d^\ast=\ind_{\{\post_\tau\ge p^\dagger\}}$ with $p^\dagger=a/(a+b)$, and the
resulting conditional terminal cost is
\begin{equation}\label{eq:Mdef}
  \E_\pi\!\big[a\,\ind_{\{d^\ast=1,\theta=0\}}+b\,\ind_{\{d^\ast=0,\theta=1\}}\mid\Fx^X_\tau\big]
   =\min\{b\post_\tau,\,a(1-\post_\tau)\}=:M(\post_\tau),
\end{equation}
where $M$ is concave and piecewise linear with apex at $p^\dagger$. Substituting
$d^\ast$ collapses \eqref{eq:risk-test} to the optimal stopping problem
\begin{equation}\label{eq:value-test}
  V(\pi,x_0)=\inf_\tau\,\E_{\pi,x_0}\big[\tau+M(\post_\tau)\big].
\end{equation}
Under $\PR_1\ll\PR_0$ on $\Fx^X_t$, Girsanov's theorem gives the likelihood ratio
\begin{equation}\label{eq:LR}
  L_t=\left.\frac{\dd\PR_1}{\dd\PR_0}\right|_{\Fx^X_t}
   =\exp\!\left\{\int_0^t\frac{\mu_1-\mu_0}{\sigma^2}(X_s)\dd X_s
   -\tfrac12\int_0^t\frac{\mu_1^2-\mu_0^2}{\sigma^2}(X_s)\dd s\right\}.
\end{equation}
Introducing the signal-to-noise ratio
\begin{equation}\label{eq:snr}
  \snr(x):=\frac{\mu_1(x)-\mu_0(x)}{\sigma(x)},
\end{equation}
one has $\dd L_t/L_t=\snr(X_t)\dd B_t^0$ under $\PR_0$, where $B^0$ is the
$\PR_0$-driving Brownian motion, so $L$ is a $\PR_0$-martingale. The posterior
odds are $\odds_t:=\post_t/(1-\post_t)=\tfrac{\pi}{1-\pi}L_t$. 
The innovation process
\begin{equation}\label{eq:innov}
  \bbar_t=\int_0^t\frac{1}{\sigma(X_s)}\Big(\dd X_s
    -\big[\mu_0(X_s)+(\mu_1-\mu_0)(X_s)\post_s\big]\dd s\Big)
\end{equation}
is a standard $\Filt^X$-Brownian motion \citep{liptser1977statistics}, and the
Kushner--Stratonovich equation for the two-valued hidden variable gives the
posterior diffusion together with the observation in innovation form,
\begin{equation}\label{eq:post-test-sde}
  \dd\post_t=\snr(X_t)\,\post_t(1-\post_t)\dd\bbar_t,\qquad
  \dd X_t=\big[\mu_0(X_t)+(\mu_1-\mu_0)(X_t)\post_t\big]\dd t+\sigma(X_t)\dd\bbar_t.
\end{equation}
The odds process is the smooth image $\odds=\post/(1-\post)$ of $\post$ under the
bijection $p\mapsto p/(1-p)$ of $(0,1)$ onto $(0,\infty)$; we therefore use
$(\post,X)$ and $(\odds,X)$ interchangeably as state descriptors and do not record
a separate stochastic differential for $\odds$, which carries an It\^o correction
relative to \eqref{eq:post-test-sde}.

\subsection{Bayesian quickest detection}\label{sec:detect}
Now the drift switches at an unobservable change time $\theta\ge0$:
\begin{equation}\label{eq:disorder-dyn}
  \dd X_t=\mu_0(X_t)\dd t+\sigma(X_t)\dd B_t\ \ (t<\theta),\qquad
  \dd X_t=\mu_1(X_t)\dd t+\sigma(X_t)\dd B_t\ \ (t\ge\theta),
\end{equation}
with the standard prior placing an atom at the origin and an exponential tail,
\begin{equation}\label{eq:prior}
  \PR(\theta=0)=\pi,\qquad \PR(\theta>t\mid\theta>0)=e^{-\lambda t},\quad\lambda>0.
\end{equation}
For an alarm time $\tau$ the linear-delay Bayes risk weighs the probability of a
false alarm against the expected detection delay,
\begin{equation}\label{eq:risk-disorder}
  R_\pi(\tau)=\PR_\pi(\tau<\theta)+c\,\E_\pi\big[(\tau-\theta)^+\big],\qquad c>0.
\end{equation}
Let $\post_t:=\PR_\pi(\theta\le t\mid\Fx^X_t)$. The false-alarm probability is
$\PR_\pi(\tau<\theta)=\E_\pi[\PR_\pi(\theta>\tau\mid\Fx^X_\tau)]=\E_\pi[1-\post_\tau]$,
and, by Fubini and the optional projection,
\begin{equation}\label{eq:delay-id}
  \E_\pi\big[(\tau-\theta)^+\big]
  =\E_\pi\!\left[\int_0^\tau\ind_{\{\theta\le s\}}\dd s\right]
  =\E_\pi\!\left[\int_0^\tau\PR_\pi(\theta\le s\mid\Fx^X_s)\dd s\right]
  =\E_\pi\!\left[\int_0^\tau\post_s\dd s\right].
\end{equation}
Hence \eqref{eq:risk-disorder} becomes the optimal stopping problem
\begin{equation}\label{eq:value-disorder}
  V(\pi)=\inf_\tau\,\E_\pi\!\left[(1-\post_\tau)+c\int_0^\tau\post_s\dd s\right],
\end{equation}
with running cost $f(p)=cp$ and terminal cost $G(p)=1-p$. The analytically
convenient Shiryaev (weighted likelihood-ratio) statistic is the posterior odds,
which admits the explicit representation
\begin{equation}\label{eq:shiryaev-stat}
  \odds_t:=\frac{\post_t}{1-\post_t}
   =\frac{\pi}{1-\pi}\,e^{\lambda t}L_t
   +\lambda\int_0^t e^{\lambda(t-s)}\,\frac{L_t}{L_s}\dd s,
\end{equation}
where $L_t/L_s$ is the post-change-to-pre-change likelihood ratio over $[s,t]$
formed from \eqref{eq:LR}. 
The filtering equation for the posterior, with the
compensator $\lambda(1-\post_t)$ of $\ind_{\{\theta\le t\}}$ induced by
\eqref{eq:prior}, is
\begin{equation}\label{eq:post-disorder-sde}
  \dd\post_t=\lambda(1-\post_t)\dd t+\snr(X_t)\,\post_t(1-\post_t)\dd\bbar_t,
\end{equation}
with $X$ as in \eqref{eq:post-test-sde}. As above, $(\odds,X)$ is the equivalent
state under the bijection $p\mapsto p/(1-p)$.

\subsection{Closed Markov state and generator}\label{sec:closed}
The reductions \eqref{eq:value-test} and \eqref{eq:value-disorder} are optimal
stopping problems driven by the posterior. Whether the posterior is by itself a
closed Markov state depends on the signal-to-noise ratio.

\begin{assumption}\label{ass:coeff}
$\mu_0,\mu_1,\sigma$ are locally Lipschitz on $I$, $\sigma>0$ on $I$, and
\eqref{eq:Xdyn} admits weakly unique nonexplosive solutions under $H_0$ and
$H_1$. Moreover, for each finite $T>0$,
\[
  \E_i\!\left[\exp\Big\{\tfrac12\int_0^T\snr^2(X_s)\dd s\Big\}\right]<\infty
  \quad(i=0,1),
\]
so that \eqref{eq:LR} is a true $\PR_0$-martingale on finite horizons.
\end{assumption}

\begin{theorem}[Closed Markov state for diffusion observations]\label{thm:closed}
Under Assumption~\ref{ass:coeff}, the pair $(\post_t,X_t)_{t\ge0}$ is a
time-homogeneous $\Filt^X$-Markov sufficient statistic for the sequential
decision problem, in both the testing and quickest-detection settings. Its
generator on $w\in C^2((0,1)\times I)$ is
\begin{equation}\label{eq:genT}
\begin{aligned}
  (\Lgen^{T}w)(p,x)
  ={}&\tfrac12\snr^2(x)\,p^2(1-p)^2 w_{pp}
   +\snr(x)\sigma(x)\,p(1-p)\,w_{px}\\
   &+\tfrac12\sigma^2(x)\,w_{xx}
   +\big[\mu_0(x)+(\mu_1-\mu_0)(x)\,p\big]\,w_x
\end{aligned}
\end{equation}
in the testing case, and $\Lgen^{D}=\Lgen^{T}+\lambda(1-p)\,\partial_p$ in the
quickest-detection case. If $\snr$ is constant, the posterior coordinate has
closed one-dimensional Markov dynamics and the problem projects onto $\post$
alone, with generator
\begin{equation}\label{eq:gen-1d}
  (\Lgen_0 w)(p)=\lambda(1-p)\,w'(p)+\tfrac12\snr^2 p^2(1-p)^2 w''(p)
\end{equation}
(omitting the $\lambda$-drift in the testing case). If $\snr$ is state dependent,
the posterior SDE is not closed in $\post$ alone. Thus $(\post,X)$ provides the
natural closed Markov realization of the filtering state. We do not attempt to
characterize exceptional projection cases in which the posterior marginal may
nevertheless be Markov.
\end{theorem}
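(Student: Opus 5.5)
The plan is to realize $(\post,X)$ as the unique weak solution of the two-dimensional innovation system \eqref{eq:post-test-sde} (respectively with the $\lambda$-drift of \eqref{eq:post-disorder-sde}), driven by the single $\Filt^X$-Brownian motion $\bbar$. Then the Markov property, the generator and the projection claims follow from standard SDE theory.

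\emph{Step 1 (innovation system).} Under Assumption~\ref{ass:coeff}, $L$ in \eqref{eq:LR} is a true $\PR_0$-martingale on $[0,T]$. Bayes' formula therefore gives $\post_t$ as a functional of the observed path, via $\odds_t=\tfrac{\pi}{1-\pi}L_t$ in testing and \eqref{eq:shiryaev-stat} in detection. The innovation $\bbar$ of \eqref{eq:innov} is an $\Filt^X$-Brownian motion by the cited Liptser--Shiryaev result. Applying It\^o's formula to $\odds_t$ and then to $\post_t=\odds_t/(1+\odds_t)$ yields \eqref{eq:post-test-sde} and \eqref{eq:post-disorder-sde}, with $X$ rewritten in innovation form. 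Both components are adapted to $\Filt^X$, and both are driven by the same $\bbar$.

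\emph{Step 2 (Markov property and generator).} The coefficients of the pair SDE are locally Lipschitz on $(0,1)\times I$, since $\snr$ is locally Lipschitz because $\sigma>0$. Pathwise uniqueness up to exit from compacts therefore holds. The pair is nonexplosive: $\post$ stays in $(0,1)$ because $\odds\in(0,\infty)$ for all $t$ (as $L>0$), and $X$ is nonexplosive by assumption. By Yamada--Watanabe we get uniqueness in law, so the solution is a time-homogeneous strong Markov process with respect to $\Filt^X$. Sufficiency follows from the reductions \eqref{eq:value-test} and \eqref{eq:value-disorder}: the payoffs depend on the path only through $\post$, whose future law given $\Fx^X_t$ depends only on $(\post_t,X_t)$.

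The generator comes from It\^o's formula for $w(\post_t,X_t)$ with
\[
\dd\langle\post\rangle_t=\snr^2\post^2(1-\post)^2\dd t,\qquad \dd\langle\post,X\rangle_t=\snr\sigma\post(1-\post)\dd t,\qquad \dd\langle X\rangle_t=\sigma^2\dd t .
\]
These quadratic-variation terms give \eqref{eq:genT}. In the detection case the extra drift $\lambda(1-p)\partial_p$ appears.

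\emph{Step 3 (projection).} If $\snr$ is constant, the $\post$-equation has coefficients free of $x$. It is then an autonomous one-dimensional SDE driven by $\bbar$, with Lipschitz coefficients on $(0,1)$, so $\post$ is Markov on its own filtration with generator \eqref{eq:gen-1d}. Because the costs depend only on $\post$, the problem projects onto $\post$. If $\snr$ is non-constant, the diffusion coefficient $\snr(X_t)\post_t(1-\post_t)$ of $\post$ depends on $X_t$, which is not a function of $\post_t$. The SDE is therefore not closed in $\post$. The statement explicitly disclaims any characterization of exceptional Markov-projection cases, so we only need to show non-closure of the equation, not non-Markovianity of the marginal.

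\emph{Main obstacle.} The main obstacle is Step 1's rigor in the detection case. Deriving \eqref{eq:post-disorder-sde} requires differentiating \eqref{eq:shiryaev-stat}, controlling the $\lambda$ integral, and verifying that $\bbar$ is a Brownian motion under the mixture measure $\PR_\pi$. I would do this by working under $\PR_0$ (no change), where $\odds$ satisfies
\[
\dd\odds_t=\lambda(1+\odds_t)\dd t+\snr\odds_t\dd B^0_t,
\]
and then changing measure by the $\Filt^X$-density martingale. The integrability condition in Assumption~\ref{ass:coeff} is exactly what this change of measure needs.
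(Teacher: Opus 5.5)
Your proposal is correct and follows the same route as the paper: you write the joint system in innovation form driven by the single $\Filt^X$-Brownian motion $\bbar$, obtain the generator from It\^o's formula using the covariation $\dd\langle\post,X\rangle_t=\snr\sigma\post(1-\post)\dd t$, and read off non-closure from the $X_t$-dependence of the posterior's diffusion coefficient. The differences are in the justification. The paper cites the Kushner--Stratonovich equation and asserts the Markov property from the time-independence of the coefficients, whereas you derive the posterior SDEs by It\^o's formula on the explicit odds and secure the Markov property through local-Lipschitz pathwise uniqueness, nonexplosion and Yamada--Watanabe, which fills in what the paper leaves implicit.
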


\begin{proof}
Assumption~\ref{ass:coeff} is Novikov's criterion, so \eqref{eq:LR} is a true
martingale and Girsanov's theorem yields the odds representations of
Sections~\ref{sec:test}--\ref{sec:detect}. The innovation theorem
\citep{liptser1977statistics} gives the $\Filt^X$-Brownian motion $\bbar$ of
\eqref{eq:innov}, and the Kushner--Stratonovich equation for the two-valued hidden
variable produces \eqref{eq:post-test-sde} and, with the compensator
$\lambda(1-\post_t)$ of $\ind_{\{\theta\le t\}}$ under the prior \eqref{eq:prior},
\eqref{eq:post-disorder-sde}. Writing $X$ in the innovation gives the joint
dynamics, with quadratic covariation
\begin{equation}\label{eq:qcov}
  \dd\langle\post,X\rangle_t=\snr(X_t)\,\sigma(X_t)\,\post_t(1-\post_t)\dd t.
\end{equation}
Applying It\^o's formula to $w(\post_t,X_t)$ and collecting drift terms yields
\eqref{eq:genT} and, with the extra posterior drift, $\Lgen^{D}$; the cross term
in \eqref{eq:genT} is exactly \eqref{eq:qcov}. All coefficients are
time-independent functions of the current value $(\post_t,X_t)$, so $(\post,X)$ is
a time-homogeneous Markov process; sufficiency for the decision problem is
inherited from the posterior being a sufficient statistic for $\theta$. If $\snr$
is constant and $w$ depends on $p$ only, \eqref{eq:genT} collapses to
\eqref{eq:gen-1d} and the marginal law of $\post$ is determined by $\post$ alone.
When $\snr$ is state dependent, the coefficient of the posterior martingale term
in \eqref{eq:post-test-sde}--\eqref{eq:post-disorder-sde} depends on $X_t$, so the
posterior equation is not autonomous in $\post_t$. The augmented process supplies
a closed Markov state; possible exceptional Markovian projections are outside the
scope of the present comparison result.
\end{proof}

\begin{remark}[On the cross term and projection]\label{rem:crossterm}
The cross term $w_{px}$ in \eqref{eq:genT} reflects the common innovation noise
driving both the posterior and the observation. When $\snr$ is constant the
posterior coordinate has closed one-dimensional Markov dynamics and the stopping
problem can be projected onto $\post$ alone; the cross term persists only in the
redundant two-dimensional representation $(\post,X)$. State dependence of $\snr$
removes the projection and makes $(\post,X)$ the operative state.
\end{remark}

\begin{remark}[Degeneracy and regularity]\label{rem:degenerate}
The diffusion matrix in \eqref{eq:genT},
\[
  \begin{pmatrix}\snr^2 p^2(1-p)^2 & \snr\sigma p(1-p)\\[2pt]
  \snr\sigma p(1-p) & \sigma^2\end{pmatrix},
\]
has determinant zero, so
$(\post,X)$ diffuses along a single direction in $(p,x)$-space: both coordinates
are driven by the one innovation $\bbar$. The operator is therefore degenerate
elliptic rather than uniformly elliptic. Hypoellipticity nonetheless holds under
H\"ormander-type conditions on $(\snr,\sigma,\mu_i)$, which underlies the
regularity and continuity of the resulting two-dimensional stopping boundaries
\citep{peskir2019continuity,ernst2024gapeev}.
\end{remark}

\begin{example}[State-dependent signal-to-noise: Bessel dimension]\label{ex:bessel}
With $\sigma\equiv1$ and $\mu_i(x)=(d_i-1)/(2x)$ on $I=(0,\infty)$, the process
$X$ is a Bessel process of dimension $d_i$ under $H_i$, and
$\snr(x)=(d_1-d_0)/(2x)$ is state dependent. The likelihood ratio \eqref{eq:LR}
becomes
\[
  L_t=\exp\!\Big\{\tfrac{d_1-d_0}{2}\int_0^t X_s^{-1}\dd X_s
   -\tfrac{(d_1-d_0)(d_1+d_0-2)}{8}\int_0^t X_s^{-2}\dd s\Big\},
\]
and does not yield a closed one-dimensional posterior equation; the augmented pair
$(\post,X)$ is the closed state. The resulting two-dimensional stopping problem
admits an analytic characterization in terms of special functions and the
associated free-boundary conditions \citep{johnson2017quickest,johnson2018sequential}.
\end{example}

\section{Generic Optimal Stopping Formulation}\label{sec:os}

The reductions above are instances of a single optimal stopping problem. Let
$Y=(Y_t)_{t\ge0}$ be a time-homogeneous Markov process on a state space
$\mathcal S$ with generator $\Lgen$ (for example $Y=\post$, $(\post,X)$, or
$(\odds,X)$). Given a running cost $f\ge0$ and a terminal cost $G$, set
\begin{equation}\label{eq:generic-os}
  V(y)=\inf_\tau\,\E_y\!\left[\int_0^\tau f(Y_s)\dd s+G(Y_\tau)\right],\qquad
  y\in\mathcal S,
\end{equation}
with continuation and stopping regions
\begin{equation}\label{eq:CD}
  \mathcal C=\{y:V(y)<G(y)\},\qquad \mathcal D=\{y:V(y)=G(y)\}.
\end{equation}
For sequential testing $f\equiv1$ and $G=M$ of \eqref{eq:value-test}; for
quickest detection $f(p)=cp$ and $G(p)=1-p$ of \eqref{eq:value-disorder}.

\paragraph{Standing assumptions.}
We assume throughout the comparison results that, for the running and terminal
costs under consideration, the value function \eqref{eq:generic-os} is finite on
$\mathcal S$ and the first-entry time
$\tau^\ast=\inf\{t\ge0:Y_t\in\mathcal D\}$ is optimal in \eqref{eq:generic-os}.
These are mild and standard under, for instance, lower semicontinuity of $G$,
continuity of $f$, and a moment or transience condition ensuring finiteness; they
hold in the diffusion models considered here \citep{peskir2006optimal}.

The dynamic programming principle then gives
\begin{equation}\label{eq:dpp}
  V\le G,\qquad \Lgen V+f=0\ \text{ on }\mathcal C,\qquad
  \Lgen V+f\ge0\ \text{ on }\mathcal D,
\end{equation}
or, equivalently, the variational inequality
\begin{equation}\label{eq:VI}
  \max\big\{-\big(\Lgen V+f\big),\ V-G\big\}=0\qquad\text{on }\mathcal S,
\end{equation}
which is in turn equivalent to the complementarity system
\begin{equation}\label{eq:complementarity}
  V\le G,\qquad \Lgen V+f\ge0,\qquad (G-V)\,(\Lgen V+f)=0 .
\end{equation}
Both arguments of the maximum in \eqref{eq:VI} are nonpositive, which makes the
sign convention transparent: on $\mathcal C$ one has $\Lgen V+f=0$, and on
$\mathcal D$ one has $\Lgen V+f\ge0$, equivalently $\Lgen G+f\ge0$.\footnote{The
compact form $\min\{\Lgen V+f,\,G-V\}=0$ is equivalent to
\eqref{eq:VI}--\eqref{eq:complementarity}; we adopt the maximum/complementarity
form because both of its arguments are manifestly nonpositive and the equality
$\Lgen V+f=0$ on $\mathcal C$ is then immediate.} Interpretations are in the
classical, Sobolev, or viscosity sense according to the regularity of $V$. The
optimal rule is $\tau^\ast=\inf\{t:Y_t\in\mathcal D\}$.

\section{Delay-Penalty Comparison}\label{sec:comparison}

The central result orders sequential rules by their running cost. It is purely
comparative and does not require solving \eqref{eq:VI}.

\begin{theorem}[Delay-penalty ordering of sequential rules]\label{thm:comparison}
Let $Y$ be a Markov process on $\mathcal S$ with generator $\Lgen$, fix a common
terminal cost $G$, and for $i=1,2$ let $f_i\ge0$ be measurable running costs with
value functions $V_i$, regions $\mathcal C_i,\mathcal D_i$ as in
\eqref{eq:generic-os}--\eqref{eq:CD}, and optimal first-entry times
$\tau_i^\ast=\inf\{t:Y_t\in\mathcal D_i\}$. Assume $f_1\ge f_2$ pointwise on
$\mathcal S$. Then:
\begin{enumerate}[label=\emph{(\roman*)}]
\item \emph{(Value)} $V_1\ge V_2$ on $\mathcal S$.
\item \emph{(Regions and stopping times)}
$\mathcal C_1\subseteq\mathcal C_2$, $\mathcal D_2\subseteq\mathcal D_1$, and
$\tau_1^\ast\le\tau_2^\ast$ $\PR_y$-almost surely for every $y$. Apart from the
standing assumptions that the value functions are finite and that the displayed
first-entry times are optimal, no monotonicity, smoothness, or one-sidedness of
the stopping set is needed.
\item \emph{(Boundaries)} If, in addition, each stopping region is one-sided in
the posterior coordinate,
$\mathcal D_i=\{(p,x)\in\mathcal S:p\ge b_i(x)\}$ for boundary functions
$b_i:I\to[0,1]$---the structure established under a monotone signal-to-noise
condition by \citet{gapeev2011sequential,gapeev2013bayesian,ernst2024gapeev}---then
$b_1(x)\le b_2(x)$ for all $x\in I$.
\end{enumerate}
\end{theorem}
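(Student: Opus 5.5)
The plan is to argue pathwise at the level of stopping times, without touching the variational inequality \eqref{eq:VI}. For part (i), fix $y\in\mathcal S$ and an arbitrary stopping time $\tau$. Since $f_1\ge f_2\ge0$ pointwise, we have
\[
\int_0^\tau f_1(Y_s)\dd s+G(Y_\tau)\ \ge\ \int_0^\tau f_2(Y_s)\dd s+G(Y_\tau)
\]
pathwise. Taking $\E_y$ gives a comparison of the two cost functionals. Nonnegativity of $f_i$ makes the integrals well defined in $[0,\infty]$, and the standing finiteness assumption keeps $G(Y_\tau)$ from creating $\infty-\infty$ issues for the relevant $\tau$. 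Taking the infimum over $\tau$ on both sides yields $V_1(y)\ge V_2(y)$. The same argument applies verbatim to the linear and nonlinear penalty specialisations once they are written in the form \eqref{eq:generic-os}.

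For part (ii), the region inclusions follow from (i) and the common terminal cost. If $y\in\mathcal C_1$, then $V_2(y)\le V_1(y)<G(y)$, so $y\in\mathcal C_2$. Taking complements gives $\mathcal D_2\subseteq\mathcal D_1$; here $V_i\le G$ holds everywhere because $\tau=0$ is admissible. The ordering of stopping times is then immediate. Any $t$ with $Y_t\in\mathcal D_2$ also has $Y_t\in\mathcal D_1$, so the set over which the infimum defining $\tau_1^\ast$ is taken contains the set defining $\tau_2^\ast$. Hence $\tau_1^\ast\le\tau_2^\ast$ for every path, and in particular $\PR_y$-a.s. No regularity of $V_i$ enters at any stage; we only use the definitions \eqref{eq:CD} and the standing assumption that the first-entry times are optimal. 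Strictly speaking, optimality is needed only to interpret $\tau_i^\ast$ as the optimal rules, not for the inequality itself.

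For part (iii), we use the one-sided representation. Fix $x\in I$ and define the sections $D_i(x):=\{p:(p,x)\in\mathcal D_i\}=[b_i(x),1)\cap(0,1)$, read suitably at the endpoints. By (ii), $D_2(x)\subseteq D_1(x)$, so $[b_2(x),\cdot)\subseteq[b_1(x),\cdot)$, and hence $b_1(x)\le b_2(x)$.

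The only delicate point is the degenerate case where a section is empty or all of $(0,1)$. I would handle it by the convention $b_i(x)=\inf D_i(x)$, with $\inf\emptyset=1$. Since the infimum is monotone under inclusion of sets, the boundary inequality follows in all cases. I expect this bookkeeping at the boundary conventions, together with stating precisely where finiteness of $V_i$ is used in (i), to be the main thing requiring care. The mathematics itself is elementary.
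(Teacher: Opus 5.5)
Your proposal is correct and follows the paper's proof essentially step for step. You compare running costs pathwise and take the infimum over $\tau$ for (i), obtain $\mathcal C_1\subseteq\mathcal C_2$ from $V_2\le V_1<G$ and then compare first-entry times for (ii), and read $\mathcal D_2\subseteq\mathcal D_1$ section by section for (iii). Your bookkeeping with $b_i(x)=\inf D_i(x)$ and $\inf\emptyset=1$ spells out the one-line ``if and only if'' in the paper. It agrees with the given $b_i$, because $b\mapsto\{p\ge b\}\cap(0,1)$ is injective and order-reversing on $[0,1]$.
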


\begin{proof}
\emph{(i)} For each admissible $\tau$, pathwise $f_1\ge f_2\ge0$ gives
$\int_0^\tau f_1(Y_s)\dd s\ge\int_0^\tau f_2(Y_s)\dd s$, hence
$\E_y[\int_0^\tau f_1\dd s+G(Y_\tau)]\ge\E_y[\int_0^\tau f_2\dd s+G(Y_\tau)]$;
taking the infimum over $\tau$ yields $V_1(y)\ge V_2(y)$.

\emph{(ii)} Choosing $\tau\equiv0$ shows $V_i\le G$. If $y\in\mathcal C_1$, i.e.\
$V_1(y)<G(y)$, then by (i) $V_2(y)\le V_1(y)<G(y)$, so $y\in\mathcal C_2$; thus
$\mathcal C_1\subseteq\mathcal C_2$ and, complementarily,
$\mathcal D_2\subseteq\mathcal D_1$. Since $\tau_1^\ast$ and $\tau_2^\ast$ are
first-entry times of the \emph{same} process $Y$ into
$\mathcal D_1\supseteq\mathcal D_2$, any entry of $Y$ into $\mathcal D_2$ already
lies in $\mathcal D_1$; hence $\tau_1^\ast\le\tau_2^\ast$ almost surely.

\emph{(iii)} Under the one-sided representation,
$\mathcal D_2\subseteq\mathcal D_1$ reads
$\{p\ge b_2(x)\}\subseteq\{p\ge b_1(x)\}$ for each fixed $x$, which holds if and
only if $b_1(x)\le b_2(x)$.
\end{proof}

The monotonicity of the optimal stopping boundary with respect to the marginal delay penalty is intuitively depicted in Figure \ref{fig:optimal_stopping_boundary}. When the system faces a more stringent delay penalty (i.e., $c_1 > c_2$), the decision-maker becomes more conservative, which structurally shrinks the continuation region. Consequently, the optimal threshold shifts downward, yielding $b_{c_1}(x) \le b_{c_2}(x)$ for all given states $X_t$.

\begin{figure}[htbp]
    \centering
    \includegraphics[width=0.9\linewidth]{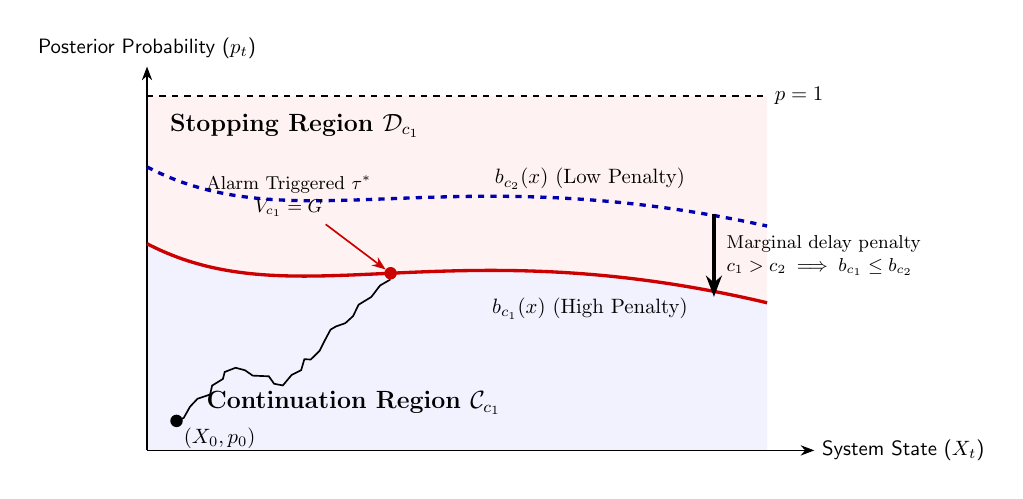}
    \caption{Illustration of the optimal stopping boundaries in the $(X_t, p_t)$ state space. The optimal policy partitions the space into a continuation region $\mathcal{C}_{c_1}$ and a stopping region $\mathcal{D}_{c_1}$, separated by the boundary $b_{c_1}(x)$. A sample trajectory of the augmented process is shown, which starts at $(X_0, p_0)$ and triggers an alarm at the optimal stopping time $\tau^*$ upon hitting the boundary. Additionally, the figure demonstrates the comparative statics: a higher marginal delay penalty ($c_1 > c_2$) strictly lowers the optimal stopping threshold, such that $b_{c_1}(x) \le b_{c_2}(x)$.}
\label{fig:optimal_stopping_boundary}
\end{figure}

\begin{remark}[Comparative-statics reading]\label{rem:comparative}
Parts (i)--(ii) hold under only the standing assumptions and deliver the
operational message: a uniformly larger marginal delay penalty makes continuation
less attractive everywhere and triggers (weakly) earlier alarms, hence shorter
expected delay at the cost of more false alarms. Part (iii) translates this into
boundary geometry, but only once the stopping region is known to be one-sided---a
property that need not hold for arbitrary state-dependent diffusions and is
exactly what the monotone signal-to-noise results secure. We do not establish the
one-sided structure here; we invoke it from
\citet{gapeev2011sequential,gapeev2013bayesian,ernst2024gapeev}.
\end{remark}

\subsection{Linear delay cost}\label{sec:linear}
The classical disorder problem has $f(p)=cp$ and $G(p)=1-p$ on the common state
$Y$. Theorem~\ref{thm:comparison} with $f_i=c_i\,p$ specializes as follows.

\begin{corollary}[Linear delay cost]\label{cor:linear}
Let $c_1\ge c_2>0$ in the Bayesian disorder problem with common state $Y$ and
terminal cost $G(p)=1-p$. Then
\[
  V_{c_1}\ge V_{c_2},\qquad
  \mathcal C_{c_1}\subseteq\mathcal C_{c_2},\qquad
  \tau_{c_1}^\ast\le\tau_{c_2}^\ast\quad\PR_y\text{-a.s.}
\]
If the stopping set has the form $\mathcal D_c=\{(p,x):p\ge b_c(x)\}$, then
$b_{c_1}(x)\le b_{c_2}(x)$ for all $x\in I$. In particular, in the constant-SNR
case the single threshold satisfies $p^\ast(c_1)\le p^\ast(c_2)$: a larger delay
cost rate implies an earlier alarm.
\end{corollary}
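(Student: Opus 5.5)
The plan is to obtain Corollary~\ref{cor:linear} by specializing Theorem~\ref{thm:comparison}. The specialization uses $Y=(\post,X)$ in the state-dependent case, or $Y=\post$ when $\snr$ is constant. It takes $G(p)=1-p$ as the common terminal cost and $f_i(p,x)=c_i\,p$ as the running costs. The first step is to check the hypotheses. Since $\post_t\in[0,1]$, each $f_i$ is nonnegative, continuous and measurable. Since $c_1\ge c_2>0$ and $p\ge0$, we have $f_1=c_1p\ge c_2p=f_2$ pointwise on $\mathcal S$. The dynamics of $Y$ in Theorem~\ref{thm:closed} (generator $\Lgen^{D}$) do not depend on $c$. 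Hence both problems are posed for the \emph{same} process, which is exactly what the pathwise argument in Theorem~\ref{thm:comparison}(ii) needs.

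The second step is the standing assumptions: finiteness of $V_{c_i}$ and optimality of the first-entry times. Finiteness is immediate, because $0\le V_c\le G\le1$ (take $\tau\equiv0$). Optimality of $\tau^\ast_c$ is part of the standing assumptions of Section~\ref{sec:os}. It can be justified by the continuity of $f$ and $G$ together with the fact that $\E_\pi[\tau]\le (1+c^{-1}\dots)$ is controlled for any rule with finite risk: since $\E[(\tau-\theta)^+]\le 1/c$ for competitive rules and $\theta$ has finite mean, $\E_\pi\tau<\infty$. With these in place, parts (i) and (ii) of Theorem~\ref{thm:comparison} give $V_{c_1}\ge V_{c_2}$, $\mathcal C_{c_1}\subseteq\mathcal C_{c_2}$ and $\tau^\ast_{c_1}\le\tau^\ast_{c_2}$ almost surely. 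Part (iii), applied under the assumed one-sided form $\mathcal D_c=\{p\ge b_c(x)\}$, gives $b_{c_1}\le b_{c_2}$.

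The third step is the constant-SNR statement. By Theorem~\ref{thm:closed} the problem projects onto $\post$ alone, with generator \eqref{eq:gen-1d}, so the value depends on $p$ only. I would argue that the stopping set is of the form $[p^\ast(c),1]$, which I expect to be the only genuine obstacle, since one-sidedness is assumed rather than derived in the corollary. My first attempt would use concavity of $V_c$ in $p$: the risk is affine in the prior, and an infimum of affine functions is concave. Concavity of $V_c$ and linearity of $G$ make $\{V_c<G\}$ an interval. Near $p=1$, immediate stopping costs $1-p\to0$, whereas continuing costs at least $c\,p\,\E[\tau]$ minus the gain in $G$, so $1\in\mathcal D_c$. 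Combining these, $\mathcal D_c=[p^\ast(c),1]$. With that representation, the inclusion $\mathcal D_{c_2}\subseteq\mathcal D_{c_1}$ is equivalent to $p^\ast(c_1)\le p^\ast(c_2)$, which is the claimed earlier alarm.
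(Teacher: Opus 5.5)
For the main assertions (value ordering, region inclusion, pathwise ordering of stopping times, and the boundary ordering under the assumed one-sided form) your argument is the paper's. The corollary is Theorem~\ref{thm:comparison} with $f_i=c_i p$ and $G(p)=1-p$ on the $c$-independent state $Y$. Finiteness and optimality of first-entry times come from the standing assumptions, so your unfinished bound on $\E_\pi\tau$ is not needed. If you want it, any $\tau$ with $R_\pi(\tau)\le 1$ satisfies $\E_\pi\tau\le\E_\pi\theta+\E_\pi(\tau-\theta)^+\le(1-\pi)/\lambda+1/c$.

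You go beyond the paper in the constant-SNR clause. The paper gives no argument there and simply presupposes Shiryaev's classical single-threshold structure. You instead derive that structure from concavity of $V_c$ in the prior. This is a sound, standard route: for each fixed $\Filt^X$-stopping time the risk $R_\pi(\tau)$ is affine in $\pi$, and the class of such times does not depend on $\pi$ because the $\PR_\pi$ are mutually equivalent on each $\Fx^X_t$. It makes the ``in particular'' clause self-contained rather than imported.

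One step in that derivation is misstated, and as written it does not give your conclusion. Concavity of $V_c-G\le 0$ makes its zero set $\mathcal D_c=\{V_c=G\}$ convex, since that set is a superlevel set of a concave function. It does not make $\mathcal C_c=\{V_c<G\}$ an interval: $h(p)=-(p-\tfrac12)^2$ is concave and nonpositive, yet $\{h<0\}$ is disconnected. Moreover, ``$\mathcal C_c$ is an interval and $1\in\mathcal D_c$'' would not by itself yield $\mathcal D_c=[p^\ast(c),1]$; you would also need $0\in\mathcal C_c$.

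Argue instead as follows:
\begin{itemize}
\item $\mathcal D_c$ is an interval, by the concavity argument above.
\item It contains $1$ trivially, because $0\le V_c\le G(1)=0$. Your heuristic comparing $1-p$ with $c\,p\,\E[\tau]$ is unnecessary.
\item It is relatively closed in $(0,1)$, because the concave $V_c$ is continuous there.
\end{itemize}
Hence $\mathcal D_c=\{p\ge p^\ast(c)\}$. The inclusion $\mathcal D_{c_2}\subseteq\mathcal D_{c_1}$ from part (ii) then gives $p^\ast(c_1)\le p^\ast(c_2)$.
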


\subsection{Sampling cost in sequential testing}\label{sec:sampling}
The same principle applies on the testing side, where the running cost is the
sampling cost. Scaling the sampling rate to $\kappa>0$ replaces
\eqref{eq:value-test} by $V_\kappa(\pi,x)=\inf_\tau\E[\kappa\tau+M(\post_\tau)]$,
i.e.\ $f\equiv\kappa$ with common terminal cost $G=M$.

\begin{corollary}[Sampling cost]\label{cor:sampling}
Let $\kappa_1\ge\kappa_2>0$ in the sequential testing problem with common terminal
cost $G=M$. Then $V_{\kappa_1}\ge V_{\kappa_2}$,
$\mathcal C_{\kappa_1}\subseteq\mathcal C_{\kappa_2}$, and
$\tau_{\kappa_1}^\ast\le\tau_{\kappa_2}^\ast$ almost surely: a more expensive
observation stream induces an earlier terminal decision and a narrower
continuation band around the indifference point $p^\dagger$.
\end{corollary}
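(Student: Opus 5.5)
The plan is to read Corollary~\ref{cor:sampling} as the special case of Theorem~\ref{thm:comparison} with state $Y=(\post,X)$ and generator $\Lgen^T$ from Theorem~\ref{thm:closed}. In that theorem take the common terminal cost $G=M$ of \eqref{eq:Mdef} and constant running costs $f_i\equiv\kappa_i$, $i=1,2$.

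\textbf{Hypotheses.} Since $\kappa_1\ge\kappa_2>0$, the costs satisfy $f_1\ge f_2\ge0$ pointwise and are trivially measurable. The terminal cost is the same for both problems. The cost rate enters only the running cost; the filtering state, the law of $Y$, and the optimal terminal decision $d^\ast=\ind_{\{\post_\tau\ge p^\dagger\}}$ (hence $M$) do not depend on $\kappa$.

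\textbf{Standing assumptions.} These require finiteness of $V_{\kappa_i}$ and optimality of the first-entry times. Finiteness is immediate: taking $\tau\equiv0$ gives $0\le V_{\kappa_i}\le M\le ab/(a+b)$. Optimality of $\tau^\ast_{\kappa_i}$ is part of the standing assumptions of Section~\ref{sec:os}. If one wants it concretely, I would cite \citep{peskir2006optimal}, using continuity of $M$ and the bound $\E[\tau^\ast]\le M/\kappa_i<\infty$. Verifying this optimality is the only step needing care, and the paper takes it as standing.

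\textbf{Applying the theorem.} Part (i) of Theorem~\ref{thm:comparison} gives $V_{\kappa_1}\ge V_{\kappa_2}$. Part (ii) gives $\mathcal C_{\kappa_1}\subseteq\mathcal C_{\kappa_2}$ and $\tau^\ast_{\kappa_1}\le\tau^\ast_{\kappa_2}$ almost surely.

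\textbf{Continuation band.} To justify the phrase about a band around $p^\dagger$, note that $M(p^\dagger)=ab/(a+b)>0$. It suffices to show $p^\dagger$ lies in each continuation region, since then $\mathcal C_{\kappa_1}\subseteq\mathcal C_{\kappa_2}$ shows the band narrows as $\kappa$ grows.

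\begin{itemize}
\item I expect $(p^\dagger,x)\in\mathcal C_\kappa$ by a short-horizon argument. Take $\tau=h\wedge$ (exit time of $\post$ from a small neighbourhood of $p^\dagger$).
\item Along this stopping time the kink of the concave $M$ produces a gain of order $\sqrt h$, while the sampling cost is only $\kappa h$.
\item This holds provided $\snr(x)\neq0$; at points where $\snr(x)=0$ I would not assert it.
\end{itemize}

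This geometric remark is interpretive. The corollary's formal claims follow directly from Theorem~\ref{thm:comparison}.
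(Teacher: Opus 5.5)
Your proposal is correct and follows the paper's route exactly: the corollary is Theorem~\ref{thm:comparison} applied on the common state $(\post,X)$ with $f_i\equiv\kappa_i\ge0$ and the $\kappa$-independent terminal cost $G=M$, and this yields the value, region, and stopping-time orderings at once. Your extra remarks (finiteness via $\tau\equiv0$, and $p^\dagger\in\mathcal C_\kappa$ when $\snr(x)\neq0$) go beyond the paper, which treats the \emph{band} phrase as purely interpretive; note, however, that for the $x$-sections of $\mathcal C_\kappa$ to be intervals around $p^\dagger$ you also need concavity of $p\mapsto V_\kappa(p,x)$ (it is an infimum of functions affine in the prior), not just $p^\dagger\in\mathcal C_\kappa$.
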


\subsection{Nonlinear marginal delay penalties}\label{sec:nonlinear}
For a general delay profile the risk is
$R_\pi(\tau)=\PR_\pi(\tau<\theta)+c\,\E_\pi[g((\tau-\theta)^+)]$ with $g\ge0$
nondecreasing and $g(0)=0$. The running cost is then no longer a function of
$\post_t$ alone, because the marginal delay cost at time $t$ depends on the
unobserved elapsed post-change duration $t-\theta$. The next proposition isolates
the relevant statistic.

\begin{proposition}[Marginal-cost augmentation]\label{prop:marginal}
Let $g$ be absolutely continuous, nondecreasing, with $g(0)=0$, and suppose
\[
  \E_\pi\!\left[\int_0^\tau g'(t-\theta)\,\ind_{\{\theta\le t\}}\dd t\right]<\infty
\]
for the stopping times $\tau$ under consideration. Then
\begin{equation}\label{eq:marginal-identity}
  \E_\pi\big[g((\tau-\theta)^+)\big]
   =\E_\pi\!\left[\int_0^\tau
     \E_\pi\big[g'(t-\theta)\,\ind_{\{\theta\le t\}}\mid\Fx^X_t\big]\dd t\right].
\end{equation}
Consequently the nonlinear-delay disorder problem is the optimal stopping problem
\eqref{eq:generic-os} with terminal cost $G(p)=1-p$ and running cost
\begin{equation}\label{eq:nonlinear-running}
  f_t=c\,\Psi_t,\qquad
  \Psi_t=\E_\pi\big[g'(t-\theta)\,\ind_{\{\theta\le t\}}\mid\Fx^X_t\big].
\end{equation}
\end{proposition}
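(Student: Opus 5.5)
The plan mirrors the linear-delay identity \eqref{eq:delay-id}, with the indicator $\ind_{\{\theta\le s\}}$ replaced by the marginal penalty $g'(s-\theta)\ind_{\{\theta\le s\}}$.

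\textbf{Step 1: pathwise representation of the delay.} Since $g$ is absolutely continuous with $g(0)=0$, on $\{\theta\le\tau\}$ we have
\[
g(\tau-\theta)=\int_\theta^\tau g'(t-\theta)\dd t=\int_0^\tau g'(t-\theta)\ind_{\{\theta\le t\}}\dd t .
\]
On $\{\tau<\theta\}$ both sides vanish, the left because $g(0)=0$ and the right because the indicator is zero on $[0,\tau]$. Hence $g((\tau-\theta)^+)=\int_0^\tau H_t\dd t$ identically, where $H_t:=g'(t-\theta)\ind_{\{\theta\le t\}}\ge0$ because $g$ is nondecreasing. On the null set where $g'$ is undefined we fix a Borel version of $g'$.

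\textbf{Step 2: exchange expectation and projection.} Write $\int_0^\tau H_t\dd t=\int_0^\infty \ind_{\{t<\tau\}}H_t\dd t$ (or use $t\le\tau$; the difference is Lebesgue-null). Since $H\ge0$, Tonelli lets us move the expectation inside. For each $t$, the event $\{t<\tau\}$ is $\Fx^X_t$-measurable because $\tau$ is an $\Filt^X$-stopping time. Conditioning on $\Fx^X_t$ therefore gives
\[
\E_\pi[\ind_{\{t<\tau\}}H_t]=\E_\pi[\ind_{\{t<\tau\}}\Psi_t].
\]
Applying Tonelli back yields \eqref{eq:marginal-identity}.

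\textbf{Main obstacle: measurability of $\Psi$.} This is exactly the point already used in \eqref{eq:delay-id}, namely choosing a jointly measurable, $\Filt^X$-optional version of $t\mapsto\Psi_t$ so that $\int_0^\tau\Psi_t\dd t$ is well defined. I would take $\Psi$ to be the optional projection of the nonnegative measurable process $H$; it exists by the general theory of processes, since $\Filt^X$ satisfies the usual conditions. With this choice, $\E_\pi[\int_0^\infty K_tH_t\dd t]=\E_\pi[\int_0^\infty K_t\Psi_t\dd t]$ holds for every nonnegative optional $K$, in particular for $K_t=\ind_{\{t<\tau\}}$. This removes any ambiguity among the versions of the conditional expectations. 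The integrability hypothesis is not needed for the equality, since everything is nonnegative. It only guarantees that both sides are finite, so the risk decomposition below is meaningful and not of the form $\infty-\infty$.

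\textbf{Step 3: the stopping reformulation.} As before, $\PR_\pi(\tau<\theta)=\E_\pi[1-\post_\tau]$. Combining this with \eqref{eq:marginal-identity} gives
\[
R_\pi(\tau)=\E_\pi\!\left[\int_0^\tau c\Psi_t\dd t+(1-\post_\tau)\right],
\]
which is \eqref{eq:generic-os} with $G(p)=1-p$ and running cost $f_t=c\Psi_t$. The running cost is adapted but not yet a function of $(\post,X)$, which motivates the augmentation that follows.
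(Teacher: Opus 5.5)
Your proposal is correct and follows the paper's own proof: the pathwise identity $g((\tau-\theta)^+)=\int_0^\tau g'(t-\theta)\ind_{\{\theta\le t\}}\dd t$, then an exchange of expectation and time integral combined with optional projection onto $\Filt^X$, which is valid because $\tau$ is an $\Filt^X$-stopping time. Using Tonelli instead of Fubini (so the integrability hypothesis only ensures finiteness) and taking $\Psi$ to be the optional projection of $H$ is a modest sharpening of the same route; just take the Borel version of $g'$ nonnegative everywhere, so that $H\ge0$ holds identically.
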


\begin{proof}
By absolute continuity and $g(0)=0$,
$g((\tau-\theta)^+)=\int_0^{(\tau-\theta)^+}g'(u)\dd u
=\int_0^\tau g'(t-\theta)\ind_{\{\theta\le t\}}\dd t$ pathwise. The integrability
hypothesis legitimizes taking $\E_\pi$ and applying Fubini together with the
optional projection of the integrand onto $\Filt^X$ (valid since $\tau$ is an
$\Filt^X$-stopping time), which gives \eqref{eq:marginal-identity} and hence the
running-cost representation \eqref{eq:nonlinear-running}.
\end{proof}

Whether $f_t=c\Psi_t$ yields a finite-dimensional Markov stopping problem depends
on $g$ and is not automatic. Three cases are representative.

\emph{(a) Linear delay,} $g(u)=u$. Then $g'\equiv1$ and
$\Psi_t=\PR_\pi(\theta\le t\mid\Fx^X_t)=\post_t$, recovering $f=c\post_t$ on the
state of Theorem~\ref{thm:closed}.

\emph{(b) Exponential delay,} $g(u)=(e^{\beta u}-1)/\beta$ with $\beta>0$. Then
$g'(u)=e^{\beta u}$ and the process $\Psi_t$ can be represented through a weighted
likelihood-ratio statistic,
$\Psi_t=e^{\beta t}\,\E_\pi[e^{-\beta\theta}\ind_{\{\theta\le t\}}\mid\Fx^X_t]$.
In state-dependent diffusion models this statistic must still be combined with the
observation state $X_t$ to obtain a closed Markov state; the representation is not
a universal dimension reduction, and the resulting alarm boundary is generally
observation-dependent \citep{gapeev2013bayesian}.

\emph{(c) General $g$.} The statistic $\Psi_t$ need not admit a finite-dimensional
filter, and the state must be augmented with accumulated-penalty information for
the stopping problem to be Markovian.

In every case in which a common Markov state $Y$ carries the statistics for two
penalties $g_1,g_2$, the pointwise ordering of marginal penalties
$g_1'\ge g_2'$ transfers, via \eqref{eq:nonlinear-running} and the monotonicity of
conditional expectation, to $f_1\ge f_2$. Theorem~\ref{thm:comparison} then
applies and yields the corresponding ordering of value functions, continuation
regions, stopping times, and---where the one-sided structure holds---alarm
boundaries. The convex (e.g.\ exponential) case has $g'$ increasing, so a larger
$\beta$ produces a pointwise-larger marginal penalty and an earlier alarm than the
linear benchmark.

\section{Free-Boundary Interpretation and Verification}\label{sec:fb}

On the Markov state $Y=(\post,X)$ (or its one-dimensional reduction),
\eqref{eq:VI} is a free-boundary problem: $\Lgen V+f=0$ on $\mathcal C$, $V=G$ on
$\mathcal D$, with matching conditions across the free boundary
$\partial\mathcal C$. The \emph{continuous-fit} condition
$V|_{\partial\mathcal C}=G|_{\partial\mathcal C}$ always holds; the
\emph{smooth-fit} condition $\nabla V|_{\partial\mathcal C}=\nabla
G|_{\partial\mathcal C}$ holds when the boundary point is probabilistically
regular for the interior of $\mathcal D$ and the diffusion is nondegenerate there.
For regular one-dimensional diffusions smooth fit is standard
\citep{peskir2006optimal}. In the present degenerate two-dimensional setting it
may fail where the diffusion coefficient $\snr(x)\,p(1-p)$ vanishes (at $p\in\{0,1\}$
or where $\snr(x)=0$) or where the boundary is otherwise irregular; there only
continuous fit is available, and the boundary's continuity is itself a delicate
question \citep{peskir2019continuity}.

When $\snr$ is constant the free-boundary problem reduces to the ordinary
differential equation $\Lgen_0 V+f=0$ on the continuation interval, with
$\Lgen_0$ of \eqref{eq:gen-1d}, and the boundary is a single threshold determined
by smooth fit; this is the setting of Section~\ref{sec:example}. When $\snr$ is
state dependent, the operator is degenerate elliptic in $(p,x)$,
$\partial\mathcal C$ is a curve, and---consistent with the two-boundary structure
found by \citet{gapeev2011sequential}---the alarm is the first exit of the posterior
from a region bounded by \emph{observation-dependent} (stochastic) boundaries;
explicit solutions exist only in special cases, and the boundary is otherwise
characterized by systems of nonlinear integral equations arising from the
change-of-variable formula with local time on curves, or computed numerically.

A candidate solution of \eqref{eq:VI} is confirmed optimal by martingale
verification.

\begin{proposition}[Verification]\label{prop:verif}
Let $\widehat V$ be continuous on $\mathcal S$, of polynomial growth, $C^1$ across
$\partial\widehat{\mathcal C}$, and $C^2$ on the interiors of
$\widehat{\mathcal C}=\{\widehat V<G\}$ and $\widehat{\mathcal D}=\{\widehat V=G\}$,
and suppose
\[
  \Lgen\widehat V+f\ge0\ \text{on }\mathcal S,\qquad
  \widehat V\le G\ \text{on }\mathcal S,\qquad
  \Lgen\widehat V+f=0\ \text{on }\widehat{\mathcal C}.
\]
Suppose moreover that for every admissible $\tau$ the local martingale
$M_{\,\cdot\,}$ in \eqref{eq:ito-verif} below, stopped along a localizing sequence
$\tau_n\uparrow\infty$, is uniformly integrable in the limit (e.g.\ a
square-integrability or sublinear-growth condition on
$\nabla\widehat V\cdot\sigma(Y)$). Then $\widehat V=V$, and
$\tau^\ast=\inf\{t:Y_t\in\widehat{\mathcal D}\}$ is optimal whenever
$\E_y[\tau^\ast]<\infty$.
\end{proposition}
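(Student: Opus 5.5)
The plan is the standard martingale verification argument: It\^o's formula on the candidate gives a lower bound valid for every stopping time, and equality along the first-entry time.

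\textbf{Step 1: It\^o expansion.} Fix $y\in\mathcal S$ and an admissible $\tau$. I would apply It\^o's formula (in its generalized form, since $\widehat V$ is only $C^1$ across $\partial\widehat{\mathcal C}$) to $\widehat V(Y_{t\wedge\tau\wedge\tau_n})$. This gives
\[
  \widehat V(Y_{\tau\wedge\tau_n})=\widehat V(y)+\int_0^{\tau\wedge\tau_n}\Lgen\widehat V(Y_s)\dd s+M_{\tau\wedge\tau_n},
\]
with $M$ the stochastic integral of $\nabla\widehat V\cdot\sigma(Y)$ against $\bbar$. This is \eqref{eq:ito-verif}. No local-time term should appear. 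The reason is that $\widehat V$ is $C^1$ across the boundary and $C^2$ on either side. I would invoke the change-of-variable formula with local time on curves, or a mollification argument under the hypoellipticity noted in Remark~\ref{rem:degenerate}. In that formula the local-time term is weighted by the jump of the normal derivative, which is zero here.

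\textbf{Step 2: lower bound $\widehat V\le V$.} Use $\Lgen\widehat V\ge -f$ and $\widehat V\le G$, then take expectations. The localized martingale term has zero mean, so
\[
  \widehat V(y)\le \E_y\!\Big[\int_0^{\tau\wedge\tau_n}f(Y_s)\dd s+\widehat V(Y_{\tau\wedge\tau_n})\Big].
\]
Let $n\to\infty$. The integral term converges by monotone convergence, since $f\ge0$. The terminal term converges by the assumed uniform integrability together with continuity and polynomial growth of $\widehat V$. Bounding $\widehat V\le G$ at $\tau$ gives $\widehat V(y)\le\E_y[\int_0^\tau f\dd s+G(Y_\tau)]$. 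Taking the infimum over $\tau$ yields $\widehat V\le V$.

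\textbf{Step 3: equality at $\tau^\ast$.} Set $\tau=\tau^\ast$. On $[0,\tau^\ast)$ the process stays in $\widehat{\mathcal C}$, where $\Lgen\widehat V+f=0$, so the inequality of Step 2 becomes an equality. At $\tau^\ast$, continuity of $\widehat V$ and of the paths places $Y_{\tau^\ast}$ in the closed set $\widehat{\mathcal D}$, so $\widehat V(Y_{\tau^\ast})=G(Y_{\tau^\ast})$. The hypothesis $\E_y[\tau^\ast]<\infty$ gives $\tau^\ast<\infty$ a.s. It also supports the passage $n\to\infty$, via dominated convergence of the running cost where $f$ is bounded (as in both examples, where $f\le c$ or $f=\kappa$). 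The result is $\widehat V(y)=\E_y[\int_0^{\tau^\ast}f\dd s+G(Y_{\tau^\ast})]\ge V(y)$. Combined with Step 2 this gives $\widehat V=V$ and optimality of $\tau^\ast$.

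\textbf{Main obstacle.} The hard step is the justification of It\^o's formula in Step 1. The generator is degenerate, and $\widehat V$ is merely $C^1$ across a free boundary that may be irregular. I would first try the Peskir local-time-on-curves formula when the boundary is a continuous curve $p=b(x)$. If that is unavailable, I would mollify $\widehat V$ and pass to the limit, using that $\{Y_t\in\partial\widehat{\mathcal C}\}$ has zero Lebesgue occupation time. I expect that occupation-time property to follow from the diffusion having nondegenerate variance along the direction transverse to the boundary.
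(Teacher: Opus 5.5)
Your proposal is correct and follows the paper's proof essentially step for step: It\^o's formula for $\widehat V(Y)$ along a localizing sequence, with the local-time term on $\partial\widehat{\mathcal C}$ removed by the $C^1$ hypothesis; the bound $\widehat V\le V$ from $\Lgen\widehat V+f\ge0$ and $\widehat V\le G$; and equality along $\tau^\ast$, because $\Lgen\widehat V+f=0$ on $\widehat{\mathcal C}$ and $\widehat V=G$ at $Y_{\tau^\ast}$. One small point: in Step 3 you do not need $f$ to be bounded, since $f\ge0$ lets monotone convergence handle the running cost there exactly as in Step 2.
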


\begin{proof}
For an admissible $\tau$ and a localizing sequence $\tau_n\uparrow\infty$, the
It\^o--Tanaka formula applied to $\widehat V(Y_{\cdot})$ gives
\begin{equation}\label{eq:ito-verif}
  \widehat V(Y_{\tau\wedge\tau_n})+\int_0^{\tau\wedge\tau_n} f(Y_s)\dd s
   =\widehat V(y)+\int_0^{\tau\wedge\tau_n}(\Lgen\widehat V+f)(Y_s)\dd s
   +M_{\tau\wedge\tau_n},
\end{equation}
where $M$ is a local martingale. The $C^1$ (smooth-fit) hypothesis across
$\partial\widehat{\mathcal C}$ ensures that the local-time term on the free
boundary, which would otherwise appear because $\Lgen\widehat V$ has a jump in its
second derivatives there, vanishes; where only continuous fit holds, the
local-time term is nonnegative and is retained in the inequality below without
affecting its direction. Since $\Lgen\widehat V+f\ge0$, taking expectations and
letting $n\to\infty$ (using the growth and uniform-integrability hypotheses so
that $\E_y[M_{\tau\wedge\tau_n}]\to0$) gives
$\widehat V(y)\le\E_y[\int_0^\tau f\dd s+\widehat V(Y_\tau)]
\le\E_y[\int_0^\tau f\dd s+G(Y_\tau)]$, hence $\widehat V\le V$. For
$\tau=\tau^\ast$ the integrand $\Lgen\widehat V+f$ vanishes on
$\widehat{\mathcal C}$ and $\widehat V(Y_{\tau^\ast})=G(Y_{\tau^\ast})$, so the
inequalities are equalities and $\widehat V(y)=\E_y[\int_0^{\tau^\ast}f\dd
s+G(Y_{\tau^\ast})]\ge V(y)$. Thus $\widehat V=V$ and $\tau^\ast$ is optimal.
\end{proof}

\section{Worked Example: Threshold Monotonicity in the Shiryaev Diffusion Model}
\label{sec:example}

We illustrate Corollary~\ref{cor:linear} in the constant-SNR quickest-detection
model, where the posterior is a closed one-dimensional diffusion
\begin{equation}\label{eq:shiryaev-diffusion}
  \dd\post_t=\lambda(1-\post_t)\dd t+\rho\,\post_t(1-\post_t)\dd\bbar_t,
  \qquad \rho:=\snr=\text{const}.
\end{equation}
With running cost $f(p)=cp$ and terminal cost $G(p)=1-p$, the value function
solves the variational inequality \eqref{eq:VI} with the one-dimensional generator
\eqref{eq:gen-1d}. On the continuation region the equation $\Lgen_0 V+cp=0$ reads
\begin{equation}\label{eq:example-ode}
  \tfrac12\rho^2 p^2(1-p)^2\,V''(p)+\lambda(1-p)\,V'(p)+c\,p=0,
\end{equation}
and on the stopping region $V=G$ with $\Lgen_0 G+cp=cp-\lambda(1-p)\ge0$ required
by \eqref{eq:complementarity}, i.e.\ $p\ge\lambda/(c+\lambda)$.

\paragraph{Numerical method.}
We solve the obstacle problem \eqref{eq:complementarity} numerically rather than
relying on a closed entrance-boundary shooting condition. The interval $[0,1]$ was
discretized with a uniform grid of size $\Delta p$. The degenerate diffusion
coefficient $\tfrac12\rho^2 p^2(1-p)^2$ was evaluated at grid points and
discretized by central differences; the drift term $\lambda(1-p)\ge0$ was upwinded
(forward difference), which renders the discrete generator a monotone $M$-matrix.
The degenerate left end $p=0$, where the diffusion coefficient vanishes and the
drift is $\lambda>0$, supplies its own discrete relation $V_0=V_1$ through the
upwinded operator, so no entrance-boundary derivative condition is imposed by
hand; at $p=1$ we set $V=G=0$. The obstacle problem was then solved by projected
(policy) iteration---each sweep performs a Gauss--Seidel/successive-overrelaxation
update of $\Lgen_h V+f=0$ followed by the projection $V\leftarrow\min\{V,G\}$
onto the obstacle---iterated until the active set stabilized and the update fell
below $10^{-11}$. The reported thresholds were stable under halving of $\Delta p$
(grids of $500,1000,2000,4000$ points agree to the displayed digits) and were
cross-checked against an entrance-boundary shooting solution of
\eqref{eq:example-ode}; the two methods agree.

\paragraph{Results.}
Table~\ref{tab:thresholds} reports the optimal posterior threshold $p^\ast(c)$ for
$\lambda=0.05$ and $\rho=1.0$, together with the implied likelihood-ratio
threshold $\odds^\ast=p^\ast/(1-p^\ast)$. The threshold decreases monotonically as
the delay cost rate $c$ increases. One checks directly that the computed
thresholds satisfy $p^\ast\ge\lambda/(c+\lambda)$, so the candidate solves the
variational inequality and is optimal by Proposition~\ref{prop:verif}.
Figure~\ref{fig:thresholds} shows the value functions peeling away from the
obstacle---continuation regions shrinking as $c$ grows---and the monotone curve
$c\mapsto p^\ast(c)$. We emphasize that the monotonicity observed in the computed
thresholds is not used as evidence for Corollary~\ref{cor:linear}; it only
illustrates the theorem, which is proved independently in
Section~\ref{sec:comparison}.

\begin{table}[t]
\centering
\begin{tabular}{cccc}
\toprule
$\lambda$ & $\rho$ & $c$ & threshold $p^\ast(c)$ \ \ ($\odds^\ast=p^\ast/(1-p^\ast)$)\\
\midrule
0.05 & 1.0 & 0.5 & $0.1735$ \ \ $(0.2099)$\\
0.05 & 1.0 & 1.0 & $0.0705$ \ \ $(0.0759)$\\
0.05 & 1.0 & 2.0 & $0.0303$ \ \ $(0.0313)$\\
0.05 & 1.0 & 5.0 & $0.0109$ \ \ $(0.0110)$\\
\bottomrule
\end{tabular}
\caption{Monotone decrease of the Shiryaev alarm threshold as the delay cost rate
$c$ increases, illustrating Corollary~\ref{cor:linear}
($p^\ast(c_1)\le p^\ast(c_2)$ when $c_1\ge c_2$). Values from the
finite-difference solution of the variational inequality, stable under halving of
the grid spacing.}
\label{tab:thresholds}
\end{table}

\begin{figure}[t]
\centering
\includegraphics[width=\textwidth]{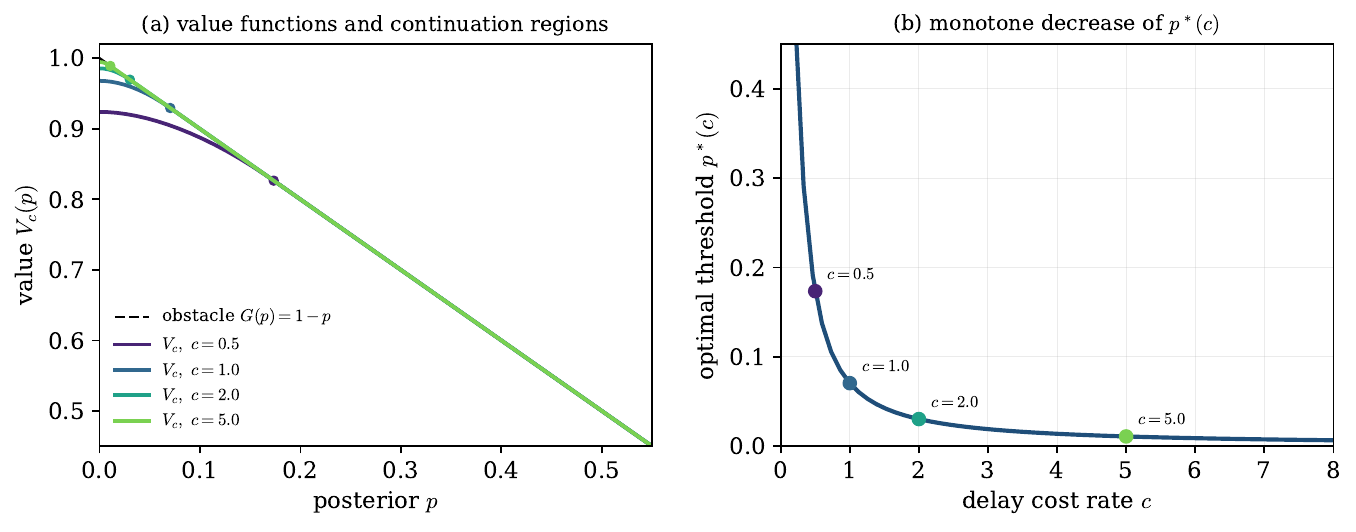}
\caption{Constant-SNR Shiryaev model ($\lambda=0.05$, $\rho=1.0$).
(a) Value functions $V_c$ below the obstacle $G(p)=1-p$; the smooth-fit points
$p^\ast(c)$ (markers) move left and the continuation region shrinks as $c$
increases. (b) The optimal threshold $p^\ast(c)$ is monotone decreasing in the
delay cost rate $c$, as predicted by Corollary~\ref{cor:linear}.}
\label{fig:thresholds}
\end{figure}

\paragraph{Interpretation.}
The comparative-statics content is direct. A higher per-unit delay cost makes the
observer less willing to wait, so the alarm is raised at a lower posterior
probability of disorder: the threshold $p^\ast(c)$, and with it the
likelihood-ratio threshold $\odds^\ast(c)$, decreases in $c$. By
Theorem~\ref{thm:comparison}(ii) the associated alarm times are ordered pathwise,
$\tau^\ast_{c_1}\le\tau^\ast_{c_2}$ for $c_1\ge c_2$, so a costlier delay yields
uniformly earlier alarms; the price is a higher false-alarm probability, since
stopping at a lower posterior is more often premature. The same qualitative
picture persists for state-dependent $\snr$, where $p^\ast$ is replaced by an
observation-dependent boundary $b_c(x)$ ordered as in
Theorem~\ref{thm:comparison}(iii).

\section{Relation to CUSUM and Shiryaev--Roberts Procedures}\label{sec:cusum}

The present comparison result is Bayesian and optimal-stopping based. It is
therefore closest to the Shiryaev procedure and its limiting Shiryaev--Roberts
forms \citep{shiryaev1963optimum,pollak1985optimal,pollak2009optimality}, and it is not a
minimax optimality statement for CUSUM \citep{lorden1971procedures,moustakides1986optimal}.
Nevertheless, the same state-dependence issue appears in the minimax formulations:
when the likelihood increments
$\dd\log L_t=-\tfrac12\snr^2(X_t)\dd t+\snr(X_t)\dd\bbar_t$ depend on the current
diffusion state, the CUSUM and Shiryaev--Roberts statistics are not closed
one-dimensional Markov processes unless the observation state $X_t$ is included,
and exactly characterized rules then involve the joint process $(\,\cdot\,,X)$. A
comparison principle for the minimax thresholds analogous to
Theorem~\ref{thm:comparison} would require monotonicity of the worst-case
detection delay in the penalty parameters and is left for future work.

\section{Conclusion}\label{sec:concl}

We have organized sequential testing and Bayesian quickest detection for
state-dependent diffusion observations around two facts. The first is a closed
Markov-state reduction identifying $(\post,X)$ as the sufficient statistic
whenever the signal-to-noise ratio is state dependent, with an explicit degenerate
generator. The second, and the methodological core of the paper, is a
delay-penalty comparison theorem: uniformly larger marginal delay costs raise the
value, shrink the continuation region, order the stopping times pathwise, and---under
a one-sided boundary representation---lower the alarm boundary. The same principle
gives a sampling-cost comparison for sequential testing. A constant-SNR Shiryaev
example, solved through the variational inequality, exhibits the predicted
threshold monotonicity. The comparison is deliberately structural rather than
constructive: it presumes a stopping formulation and, for the boundary statement,
the one-sided structure secured by monotone signal-to-noise conditions
\citep{ernst2024gapeev}, and it does not provide new explicit boundary solutions.
Natural extensions include multi-source and multi-hypothesis detection, where the
posterior lives on a simplex and the stopping regions are separated by
hypersurfaces \citep{dayanik2008multisource}; nonlinear penalties whose marginal
statistic \eqref{eq:nonlinear-running} requires genuine state augmentation; and
minimax analogues of the comparison principle. In each case the comparison
continues to apply whenever a common Markov state and terminal cost are available.

\bibliography{reference}

\end{document}